\documentclass[apaper,11pt]{article}
\usepackage{mathrsfs}
\usepackage{color}
\usepackage{amsmath, epsfig, cite}
\usepackage{amsthm}
\usepackage{amssymb}
\usepackage{amsfonts}
\usepackage{latexsym}
\usepackage{graphicx}
\usepackage{multirow}

\newtheorem{thm}{Theorem}[section]

\newtheorem{lem}{Lemma}[section]

\numberwithin{equation}{section}

\makeatletter \@addtoreset{equation}{section} \makeatother

\setlength{\textwidth}{155mm} \setlength{\textheight}{22cm}
\setlength{\headheight}{3cm} \setlength{\topmargin}{0pt}
\setlength{\headsep}{0pt} \setlength{\oddsidemargin}{0pt}
\setlength{\evensidemargin}{0pt}

\parindent 15pt
\voffset -25mm \rm
\parskip=6pt

\begin{document}
%\thispagestyle{empty}

%\vspace*{20mm} \pagestyle{empty}

\begin{center}
{\Large \bf The g-extra connectivity of the Mycielskian}\footnote{Supported by the
NSFQH No.2019-ZJ-921; NSFC No.11661068.}

\vspace{3mm}

{\small \bf He Li}$^a$,\
{\small \bf Shumin Zhang}$^{b}$\
{\small \bf Chengfu Ye}$^{b}$\footnote{Corresponding author.},\

\vspace{3mm}

\baselineskip=0.20in
$^a${\it School of Computer,} \\
{\it Qinghai Normal University, Xining, 810001, China} \\
$^b${\it School of Mathematics and Statistics,} \\
{\it Qinghai Normal University, Xining, 810001, China} \\

{\tt lihe0520@yahoo.com, zhangshumin@qhnu.edu.cn, yechf@qhnu.edu.cn}

\end{center}

\baselineskip=0.20in
\begin{center}
{\bf Abstract }
\end{center}

\baselineskip=0.30in
The $g$-extra connectivity is an important parameter to measure the ability of tolerance and reliability of interconnection networks. Given a connected graph $G=(V,E)$ and a non-negative integer $g$, a subset $S\subseteq V$ is called a $g$-extra cut of $G$ if $G-S$ is disconnected and every component of $G-S$ has at least $g+1$ vertices. The cardinality of the minimum $g$-extra cut is defined as the $g$-extra connectivity of $G$, denoted by $\kappa_g(G)$. In a search for triangle-free graphs with arbitrarily large chromatic numbers, Mycielski developed a graph transformation that
transforms a graph $G$ into a new graph $\mu(G)$, which is called the Mycielskian of $G$. This paper investigates the relationship of the g-extra connectivity of the Mycielskian $\mu(G)$ and the graph $G$, moreover, show that $\kappa_{2g+1}(\mu(G))=2\kappa_{g}(G)+1$ for $g\geq 1$ and $\kappa_{g}(G)\leq min\{g+1, \lfloor\frac{n}{2}\rfloor\}$.

{\bf Keywords:} connectivity; $g$-extra connectivity; Mycielskian.

{\bf AMS subject classification 2010:} 05C40; 05C05; 05C76.

\baselineskip=0.30in
\section{Introduction}

With the fast advancements of multiprocessor systems, the topic of an interconnection network is an important research area. Furthermore, the fault-tolerance or reliability of a network are important. A network is usually represented by a graph where vertices represent processors and edges respresent communication links between \cite{WX2}. The fault-tolerance or reliability of a network are often measured by the connectivity of a correasponding graph. A (vertex) cut of $G$ is a set $S\subseteq G$ such that $G-S$ is disconnected or trivial. The connectivity of $G$, denoted by $\kappa(G)$, is defined as the minimum cardinality over all the cuts of $G$. However, the parameter tacitly assumes that all vertices that are
adjacent to, the same vertex can potentially fail simultaneously. This
is practically impossible in some network applications. To solve this problem, the $g$-extra connectivity is introduced.

In 1996, F\'{a}brega and Fiol \cite{FF} proposed the $g$-extra connectivity of a graph. Given a connected graph $G=(V,E)$ and a non-negative integer $g$, a subset $S\subseteq V$ is called a $g$-extra cut of $G$ if $G-S$ is disconnected and every component of $G-S$ has at least $g+1$ vertices. The cardinality of the minimum $g$-extra cut is defined as the $g$-extra connectivity of $G$, denoted by $\kappa_g(G)$. Note that $\kappa_0(G)=\kappa(G)$. In the study of the $g$-extra connectivity, much of the work has been focused on the computing some given networks for $g$ with smaller values (see, for example, \cite{FF, Boesch, Chang, CN, FJ, Esfahanian, EA, Hong, Hsieh, Latifi, Wan, Yang, Zhu, ZQ }). In fact, the computing of the $g$-extra connectivity is very difficult. As pointed out in \cite{Chang, EA}, no polynomial-time algorithm has been presented to compute $\kappa_{g}(G)$ for a general graph.

In a search for triangle-free graphs with arbitrarily large chromatic numbers, Mycielski \cite{Mycielski}  developed an interesting
graph transformation as follows. For a graph $G=(V ,E)$, the Mycielskian of $G$ is the graph $\mu(G)$ with the vertex set $V\cup V^{'}\cup \{u\}$, where $V^{'}=\{x^{'}:x\in V\}$ and the edge set $E\cup \{xy^{'}:xy\in E\}\cup \{y^{'}u:y^{'}\in V^{'}\}$. The vertex $x^{'}$ is called the twin
of the vertex $x$ (and $x$ is the twin of $x^{'}$). Furthermore, for any $F\subseteq V$, the set $F^{'}$ ($F^{'}\subseteq V^{'}$) is called the twin
of $F$ (and $F$ is the twin of $F^{'}$). The vertex $u$ is called the root of $\mu(G)$. For example, let $G$ be a path of order $n$, then the Mycielskian of $G$ is shown in Figure $1$. For $n\geq 2$, $\mu^{n}(G)$ is defined iteratively
by setting $\mu^{n}(G)=\mu(\mu^{n-1}(G))$.

\begin{figure}[!ht]
\begin{center}
\includegraphics[scale=0.75]{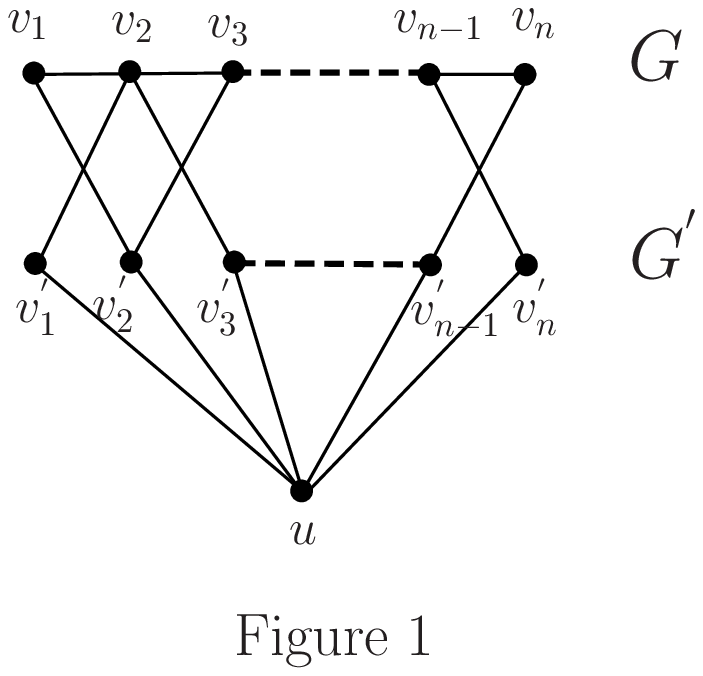}
\end{center}
\end{figure}

In recent times, there has been an increasing interest in the study of the Mycielskian, especially, in the study of their
circular chromatic numbers \cite{CGJ, Hajibolhassan, Lam, Liu, LiuH}. One of these papers is by Chang et al. \cite{CGJ}  wherein they have proved if $G$ has no isolated vertices, then $\kappa(\mu(G))\geq \mu(G)+1$. In 2008, R. Balakrishnan and S. Francis Raj \cite{Balakrishnan} have investigated the vertex-connectivity $\kappa(\mu(G))$ and edge-connectivity $\kappa^{'}(\mu(G))$ of $\mu(G)$ and obtained the
following results.

\begin{lem}{\upshape \cite{Balakrishnan}}\label{lem1-2}
If $G$ is a connected graph, then

$(1)$ $\kappa(\mu(G))=2\kappa(G)+1$ iff $\delta(G)\geq 2\kappa(G),$

$(2)$ $\kappa(\mu(G))=min\{\delta(G)+1, 2\kappa(G)+1\}$.
\end{lem}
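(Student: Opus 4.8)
I would prove part (2), that $\kappa(\mu(G))=\min\{\delta(G)+1,\,2\kappa(G)+1\}$, and then read off part (1), since by part (2) one has $\kappa(\mu(G))=2\kappa(G)+1$ exactly when $2\kappa(G)+1\le\delta(G)+1$, i.e.\ when $\delta(G)\ge 2\kappa(G)$. Throughout I assume $|V(G)|\ge 2$, so that $\delta(G)\ge 1$ and $\mu(G)$ is connected.

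For the upper bound: a direct count gives $\deg_{\mu(G)}(u)=|V|$, $\deg_{\mu(G)}(x)=2\deg_G(x)$ for $x\in V$, and $\deg_{\mu(G)}(x')=\deg_G(x)+1$ for $x'\in V'$; since $\delta(G)\ge 1$ this yields $\delta(\mu(G))=\delta(G)+1$, hence $\kappa(\mu(G))\le\delta(G)+1$. For the second bound, take a minimum vertex cut $S$ of $G$ and consider $R=S\cup S'\cup\{u\}$. After removing $R$, each surviving twin $z'$ (with $z\notin S$) is adjacent exactly to $N_G(z)\setminus S$, so every component $B$ of $G-S$ contributes either a component $B\cup\{z':z\in B\}$ of $\mu(G)-R$ or, when $|B|=1$, two isolated vertices; in all cases $\mu(G)-R$ is disconnected, so $\kappa(\mu(G))\le 2\kappa(G)+1$.

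For the lower bound, let $T$ be a minimum cut of $\mu(G)$; if $|T|\ge\delta(G)+1$ there is nothing to prove, so assume $|T|\le\delta(G)$, which forces $\mu(G)-T$ to have no isolated vertex. If $u\notin T$, then $u$ together with all surviving twins lies in one component $C_1$, so every other component lies inside $V$; choosing a vertex $x$ in such a component forces $(N_G(x))'\subseteq T$, whence $\deg_G(x)\le|T|\le\delta(G)\le\deg_G(x)$ and so $T=(N_G(x))'\subseteq V'$. But then every original vertex survives, every vertex $y$ of that component satisfies $N_G(y)=W$ (where $W$ is the twin set of $T$), which forces $W$ into $y$'s component, and taking $w\in W$ gives the contradiction $w\in N_G(w)=W$. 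Hence $u\in T$; write $T=\{u\}\cup T_V\cup T_{V'}$ and let $W$ be the twin set of $T_{V'}$, so $|T|=1+|T_V|+|W|$. Using $|T_V|<\delta(G)\le\deg_G(v)$ one checks that every component of $\mu(G)-T$ meets $V$, so $G-T_V$ is disconnected and $|T_V|\ge\kappa(G)$. The crux is to show $|T_V|+|W|\ge 2\kappa(G)$: since $\mu(G)-T$ is disconnected, there is a partition $V=T_V\sqcup P\sqcup Q$ with $P,Q$ nonempty unions of components of $G-T_V$ such that every $z\in T_V\setminus W$ has $N_G(z)$ disjoint from $P$ or from $Q$ (otherwise a surviving twin of such a $z$ would link the two sides); splitting $T_V\setminus W=T_P\sqcup T_Q$ accordingly gives $N_G(P)\subseteq P\cup T_P\cup(T_V\cap W)$ and $N_G(Q)\subseteq Q\cup T_Q\cup(T_V\cap W)$. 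As $G$ is connected, $T_P\cup(T_V\cap W)$ and $T_Q\cup(T_V\cap W)$ are both nonempty vertex cuts of $G$, so $|T_P|+|T_V\cap W|\ge\kappa(G)$ and $|T_Q|+|T_V\cap W|\ge\kappa(G)$; adding these and using $|T_P|+|T_Q|=|T_V\setminus W|$ yields $|T_V|+|T_V\cap W|\ge 2\kappa(G)$, hence $|T|\ge 1+|T_V|+|T_V\cap W|\ge 2\kappa(G)+1$. Together with the upper bound this proves part (2), and part (1) follows.

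I expect the main obstacle to be this last analysis in the case $u\in T$: one must pin down exactly how surviving twins of deleted vertices can reconnect the components of $G-T_V$ inside $\mu(G)-T$, and then extract from that two vertex cuts of $G$ whose sizes, with $T_V\cap W$ counted twice, sum to at least $2\kappa(G)$. The degree computations, the cut $S\cup S'\cup\{u\}$, and the case $u\notin T$ are comparatively routine.
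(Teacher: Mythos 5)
This lemma is imported by the paper from \cite{Balakrishnan} and is stated without proof, so there is no internal proof to compare your argument against; judged on its own, your proof is correct and complete. The degree computation giving $\delta(\mu(G))=\delta(G)+1$, the explicit cut $S\cup S'\cup\{u\}$ for the bound $\kappa(\mu(G))\le 2\kappa(G)+1$, the treatment of a minimum cut $T$ with $|T|\le\delta(G)$ (no isolated vertices, the contradiction when $u\notin T$ via $T=(N_G(x))'$ and the self-adjacency of a vertex of $W$), and the crux in the case $u\in T$ all check out: every component of $\mu(G)-T$ meets $V$, there are no edges between $P$ and $Q$, the split $T_V\setminus W=T_P\sqcup T_Q$ is well defined because a surviving twin of $z\in T_V\setminus W$ cannot see both sides, and the two resulting cuts of $G$ give $|T_V|+|T_V\cap W|\ge 2\kappa(G)$, hence $|T|\ge 2\kappa(G)+1$. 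Two minor points worth making explicit: the hypothesis $|V(G)|\ge 2$ you impose is genuinely needed (for $G=K_1$ the Mycielskian is disconnected, so the stated formula fails under the usual conventions), and when $G$ is complete the cut $S$ does not exist, but then $\min\{\delta(G)+1,2\kappa(G)+1\}=\delta(G)+1$, so the degree bound alone supplies the upper bound and your lower-bound case $u\in T$ is vacuous there; with those remarks added, the argument stands as a self-contained proof of the cited result, following the natural strategy (explicit cuts for the upper bound, analysis of a minimum cut of $\mu(G)$ according to the position of the root for the lower bound).
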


In 2012, Guo and Liu \cite{Guo} have proposed that $\mu(G)$ is super-$\kappa$ if and only if $\delta(G) < 2\kappa(G)$, and $\mu(G)$ is super-$\lambda$
if and only if $G\ncong K_2$ for a connected graph $G$
with $|V(G)| \geq 2$.

In this paper, we investigate the relationship of the g-extra connectivity of the Mycielskian $\mu(G)$ and the graph $G$, moreover, show that $\kappa_{2g+1}(\mu(G))=2\kappa_{g}(G)+1$ for $g\geq 1$ and $\kappa_{g}(G)\leq g+1$.

\section{Terminology and notations}

All graphs considered in this paper are connected, undirected, finite and
simple. We refer to \cite{Bondy} for graph theoretical notation and
terminology not described here. For a graph $G$, let $V$, $E$, $e(G)$ and $n(G)$ denote the set of vertices, the set of edges, the size
and the order of $G$, respectively. A subgraph $H$ of $G$ is a graph with $V(H)\subseteq V(G)$,
$E(H)\subseteq E(G)$, and the endpoints of every edge in $E(H)$ belonging to $V(H)$. For any subset $X$ of $V(G)$, we use $G-X$ to denote the subgraph of G obtained by removing all the vertices
of $X$ together with the edges incident with them from $G$. If $X={v}$ , we simply write $G-v$ for $G-{X}$. The degree of a vertex $v$ in $G$, denoted by $deg_G(v)$, is the number of edges of $G$
incident with $v$. The neighbor set of a set $X\subseteq V$ (or a vertex $v$ ) in $G$ is denoted by $N_G(X)$ (or $N_G(v)$) and the $\delta(G)$ is minimum degree of the vertices of $G$.

\section{Main results}

In this section, we determine the relationship of the g-extra connectivity of the Mycielskian $\mu(G)$ and the graph $G$ for $g\geq 0$.

For a graph $G$, by $\kappa_0(G)=\kappa(G)$ and Lemma $1.1$, the following result is immediately obtained when $g=0$.

\begin{thm}\label{thm2-1}
If $G$ is a connected graph, then

$(1)$ $\kappa_{0}(\mu(G))=2\kappa_{0}(G)+1$ iff $\delta(G)\geq 2\kappa_{0}(G)$,

$(2)$ $\kappa_{0}(\mu(G))=min\{\delta(G)+1, 2\kappa_{0}(G)+1\}$.
\end{thm}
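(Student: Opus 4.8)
The plan is to derive Theorem \ref{thm2-1} as a direct specialization of Lemma \ref{lem1-2}, the only substantive point being the identification $\kappa_0 = \kappa$ for the graphs involved. First I would recall the definitions: a $0$-extra cut of a graph $H$ is a set $S\subseteq V(H)$ such that $H-S$ is disconnected and every component of $H-S$ has at least $0+1 = 1$ vertex. Since the size requirement ``at least one vertex'' is automatically satisfied by every component of any graph, a $0$-extra cut is simply a vertex set whose removal disconnects $H$; consequently $\kappa_0(H)=\kappa(H)$ whenever $H$ is connected and admits such a disconnecting set (equivalently, $H$ is connected and not complete). This is exactly the stated convention $\kappa_0(G)=\kappa(G)$.

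Next I would check that this identification legitimately applies to both $G$ and $\mu(G)$ in the hypotheses of Lemma \ref{lem1-2}. For $G$ connected on at least two vertices, $\delta(G)\geq 1$, so $G$ has no isolated vertices and $\mu(G)$ is connected. Moreover $\mu(G)$ is never complete: the root $u$ is adjacent only to the vertices of $V'$ and to no vertex of $V$, so $u$ is non-adjacent to every $x\in V$. Hence $\kappa_0(\mu(G))=\kappa(\mu(G))$, and, provided $G$ itself is not complete, $\kappa_0(G)=\kappa(G)$ as well (when $G$ is complete the assertions about $\kappa_0(G)$ hold by the same convention).

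With these equalities in hand, the two parts follow by pure substitution. Part $(1)$: by Lemma \ref{lem1-2}$(1)$, $\kappa(\mu(G))=2\kappa(G)+1$ iff $\delta(G)\geq 2\kappa(G)$; replacing $\kappa(\mu(G))$ by $\kappa_0(\mu(G))$ and $\kappa(G)$ by $\kappa_0(G)$ yields $\kappa_0(\mu(G))=2\kappa_0(G)+1$ iff $\delta(G)\geq 2\kappa_0(G)$. Part $(2)$: by Lemma \ref{lem1-2}$(2)$, $\kappa(\mu(G))=\min\{\delta(G)+1,\,2\kappa(G)+1\}$, which rewrites as $\kappa_0(\mu(G))=\min\{\delta(G)+1,\,2\kappa_0(G)+1\}$. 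I do not anticipate a genuine obstacle here; the only delicate point is the degenerate behaviour of $\kappa_0$ on complete (or very small) graphs, which is absorbed into the standing convention $\kappa_0(G)=\kappa(G)$ adopted in Section 1, so the proof is essentially a one-line reduction to the previously established Lemma \ref{lem1-2}.
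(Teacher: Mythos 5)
Your proposal is correct and follows exactly the same route as the paper, which simply invokes the convention $\kappa_0(G)=\kappa(G)$ and specializes Lemma \ref{lem1-2}; your additional care about complete graphs and the non-completeness of $\mu(G)$ is a welcome refinement but does not change the argument.
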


Furthermore, we can obtain the relationship of the g-extra connectivity of the Mycielskian $\mu(G)$ and the graph $G$ for $g\geq 1$.

\begin{thm}\label{thm2-2}
For a graph $G$, let $g$ be a non-negative integer $(g\geq 1)$ and $\kappa_{g}(G)\leq min\{g+1, \lfloor\frac{n}{2}\rfloor\}$, then
$$
\kappa_{2g+1}(\mu(G))=2\kappa_{g}(G)+1.
$$
\end{thm}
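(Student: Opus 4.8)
The plan is to prove the equality $\kappa_{2g+1}(\mu(G))=2\kappa_g(G)+1$ by establishing the two inequalities separately. For the upper bound $\kappa_{2g+1}(\mu(G))\le 2\kappa_g(G)+1$, I would start from a minimum $g$-extra cut $S$ of $G$, so $|S|=\kappa_g(G)$ and $G-S$ splits into components $C_1,C_2,\dots$ each with at least $g+1$ vertices. The natural candidate for a $(2g+1)$-extra cut of $\mu(G)$ is $T=S\cup S'\cup\{u\}$, where $S'$ is the twin of $S$ and $u$ is the root; note $|T|=2|S|+1=2\kappa_g(G)+1$. After deleting $T$ from $\mu(G)$, every component $C_i$ of $G-S$ gives rise to a component of $\mu(G)-T$ consisting of $C_i$ together with the twins of $V(C_i)$ lying in the remaining copy of $V'$; one must check that $V(C_i)\cup V(C_i)'$ is indeed connected in $\mu(G)-T$ (using that $C_i$ has an edge, since $|V(C_i)|\ge g+1\ge 2$, so each twin vertex in $V(C_i)'$ attaches to $C_i$) and that distinct $C_i$ are not reconnected through the deleted $u$. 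The size of each such component is $2|V(C_i)|\ge 2(g+1)=2g+2$, so $T$ is a legitimate $(2g+1)$-extra cut. The hypothesis $\kappa_g(G)\le\lfloor n/2\rfloor$ presumably enters here to guarantee $\mu(G)-T$ is nontrivial / that $S$ itself is a proper subset in the relevant sense.

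For the lower bound $\kappa_{2g+1}(\mu(G))\ge 2\kappa_g(G)+1$, I would take an arbitrary minimum $(2g+1)$-extra cut $F$ of $\mu(G)$ and argue $|F|\ge 2\kappa_g(G)+1$. The key structural observations are: (i) the root $u$ has $N_{\mu(G)}(u)=V'$, and if $u\notin F$ then $u$ together with all of $V'\setminus F$ lies in one component, which constrains how the remaining vertices can be separated; (ii) every twin vertex $x'$ satisfies $N_{\mu(G)}(x')=N_G(x)\cup\{u\}$. The standard approach is to show that $F\cap V$ must "cover" a $g$-extra disconnection of $G$ in the following sense: restricting $F$ to $V$ (or a suitable modification of it) yields a $g$-extra cut of $G$, so $|F\cap V|\ge\kappa_g(G)$; and then symmetrically, using the twin structure, $|F\cap V'|\ge\kappa_g(G)$; and finally $u\in F$ (or a compensating extra vertex is forced), contributing the "$+1$". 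Combining, $|F|=|F\cap V|+|F\cap V'|+|F\cap\{u\}|\ge 2\kappa_g(G)+1$.

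The main obstacle will be the lower bound, specifically verifying that the "$+1$" is genuinely forced and that the restrictions of $F$ to $V$ and to $V'$ are each large enough. If $u\notin F$, then all surviving twin vertices and $u$ are in a single component $D$, and any other component of $\mu(G)-F$ must consist entirely of original vertices from $V$; such a component $C$ with $|V(C)|\ge 2g+2$ must have all its $V'$-twins and all its $V$-neighbors in $F$, and one has to show this forces $|F|$ to be at least $2\kappa_g(G)+1$ anyway — typically by checking that $(F\cap V)$ separates $C$ from the rest of $G$ with the component-size condition $|V(C)|\ge 2g+2\ge g+1$ satisfied, and that $F$ must also kill enough twin vertices. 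The delicate points are (a) ensuring the component-size threshold $2g+2$ on the $\mu(G)$ side correctly translates to the threshold $g+1$ on the $G$ side in both directions, which is exactly where the hypothesis $\kappa_g(G)\le g+1$ should be used to prevent degenerate small components, and (b) handling the case where some component of $\mu(G)-F$ mixes $V$ and $V'$ vertices, showing it cannot be "too small" and using the twin-edge correspondence $xy\in E\iff xy'\in E(\mu(G))$ to pull back a $g$-extra cut of $G$. I expect the case analysis on whether $u\in F$ and on the distribution of components between $V$-only, $V'$-adjacent, and mixed to be the technical heart of the argument.
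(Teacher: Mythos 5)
Your upper-bound argument is exactly the paper's: delete $F\cup F'\cup\{u\}$ for a minimum $g$-extra cut $F$ of $G$, and check that each component $C_i$ of $G-F$ together with its twin set remains connected (every vertex of $C_i$ has a neighbour inside $C_i$ because $|C_i|\ge g+1\ge 2$) and has $2|C_i|\ge 2g+2$ vertices. That half is fine. One small correction: the hypothesis $\kappa_g(G)\le\lfloor n/2\rfloor$ is not there to make $\mu(G)-T$ nontrivial; the paper uses it in the lower bound, to rule out the situation where the removed set swallows all of $M'\cup N_{A'}(M)$, which would force $|S|\ge n+1>2\kappa_g(G)$.

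The lower bound is where your proposal has a genuine gap. You propose the decomposition $|F|=|F\cap V|+|F\cap V'|+|F\cap\{u\}|\ge\kappa_g(G)+\kappa_g(G)+1$, but none of the three separate bounds holds as stated, and you do not supply the repair. First, $u\in F$ is not forced: a cut that isolates a component $C\subseteq V$ by removing $N_G(C)$, its twin $(N_G(C))'$, and all of $C'$ leaves $u$ alive; you acknowledge this only parenthetically. Second, and more seriously, $F\cap V$ need not be a $g$-extra cut of $G$: a component of $\mu(G)-F$ of size at least $2g+2$ can meet $V$ in as little as a single vertex, with the rest of its mass sitting in $\{u\}\cup V'$, so the threshold $2g+2$ in $\mu(G)$ does not translate into the threshold $g+1$ for the components of $G-(F\cap V)$. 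Your phrase ``or a suitable modification of it'' is precisely the missing argument. Third, the claimed symmetric bound $|F\cap V'|\ge\kappa_g(G)$ has no symmetric justification, because the twin structure is not symmetric: $V'$ is an independent set and $x'$ is not adjacent to $x$, so there is no copy of $G$ inside $V'$ to which a cut argument can be applied. The paper avoids this decomposition entirely: it assumes $|S|\le 2\kappa_g(G)$ and runs a case analysis on whether $u\in S$, on whether $|S\cap V|<\kappa_g(G)$, and on the component structure of $G-(S\cap V)$, showing in each case that $\mu(G)-S$ is either connected or has a component with at most $2g+1$ vertices. Your sketch names the right difficulties, but it defers exactly the steps that constitute the proof.
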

\begin{proof}

By the definition of $\kappa_g(G)$, there exists a set $F\subseteq V$ and $|F|=\kappa_g(G)$ whose deletion results that $G$ is disconnected and every remaining component has at least $g+1$ vertices. Clearly, from the structure of $\mu(G)$, there is the twin $F^{'}$ of $F$, and deleting $F\cup F^{'}\cup u$ in $\mu(G)$, we know $\mu(G)-(F\cup F^{'}\cup u)$ is disconnected and each remaining component has at least $2g+2$ vertices. Hence, $\kappa_{2g+1}(\mu(G))\leq 2\kappa_{g}(G)+1$.

To prove the converse, let $S$ be a arbitrary vertex set of $\mu(G)$ and $|S| \leq 2\kappa_{g}(G)$, we are to derive a contradiction. For the convenience of discussion,
let $S\cap V=A$ and $S\cap V{'}=B{'}$. The following two cases are considered.

\textbf{Case 1.} $u\notin S$.

\textbf{Case 1.1} $|A|< \kappa_{g}(G)$.

\textbf{Claim 1.} If $G-A$ is connected in $G$, then $\mu(G)-S$ is connected.

Let $M=G-A$ and $C^{'}=M^{'}+N_{A^{'}}(M)$, where $M^{'}$ and $A^{'}$ are the twins of $M$ and $A$, respectively. We consider the claim from the following situations.

\begin{itemize}

\item $C^{'}\nsubseteq B^{'}$.

     There exists at least one vertex $w\in C^{'}$ and $w\notin B^{'}$. Since $u$ is adjacent to all vertices of $V^{'}$ and $w$ is connected with $M$, we know $\mu(G)-S$ is connected.

\item  $C^{'}\subseteq B^{'}$.

From the structure of $\mu(G)$, it is easy to get that $\mu(G)-S$ is disconnected. Since $2\kappa_{g}(G)\geq |S|=|A+B^{'}|\geq |A+M^{'}+N_{A^{'}}(M)|\geq n+1$, contradicting that $\kappa_{g}(G)\leq \lfloor\frac{n}{2}\rfloor$.

\end{itemize}

\textbf{Claim 2.} If $G-A$ is disconnected in $G$ and there exists at least one component that has at most $g$ vertices, then $\mu(G)-S$ is connected, or $\mu(G)-S$ is disconnected and there exists at least one component that has at most $2g+1$ vertices.

Let $G_1,G_2,\cdots,G_r$ $(r\geq 2)$ are all components of $G-A$. We consider the claim from the following situations.

\begin{itemize}
\item $|G_i|\leq 2g+1$ for any $G_i$ $(1\leq i\leq r)$.

    If $G_i^{'}+N_{G^{'}}(G_i) \subseteq B^{'} $ ($G_i^{'}$ is the twin of $G_i$), then $\mu(G)-S$ is disconnected and $G_i$ is one component of $\mu(G)-S$. If $G_i^{'}+N_{G^{'}}(G_i) \nsubseteq B^{'}$, then there exists at least one vertex $w\in G_i^{'}+N_{G^{'}}(G_i)$ and $w\notin B^{'}$. Since $w$ is adjacent to $u$ and is connected with $G_i$, we know $\mu(G)-S$ is connected.

\item $|G_i|\geq 2g+2$ for some $G_i$ $(1\leq i\leq r)$.

     Without loss of generality, let $|G_1|\geq 2g+2$. If $(G_1^{'}+N_{G^{'}}(G_1)) \subseteq B^{'} $, then $\mu(G)-S$ is disconnected and $G_1$ is one component of $\mu(G)-S$. Since $2\kappa_{g}(G)\geq |S|=|A+B^{'}|\geq |A+G_1^{'}+N_{G^{'}}(G_1)|\geq 1+2g+2+1=2g+4$, contradicting that $\kappa_{g}(G)\leq g+1$. If $(G_1^{'}+N_{G^{'}}(G_1)) \nsubseteq B^{'}$, then $\mu(G)-S$ is either connected or disconnected and all components of $\mu(G)-S$ have at most $2g+1$ vertices.
\end{itemize}

From Claim $1$ and Claim $2$, when $|A|< \kappa_{g}(G)$, we can get $\mu(G)-S$ is connected, or $\mu(G)-S$ is disconnected and there exists at least one component that has at most $2g+1$ vertices, contradicting that the definition of $\kappa_{2g+1}(\mu(G))$.\qed

\item \textbf{Case 1.2} $|A|\geq \kappa_{g}(G)$.

\textbf{Claim 3.}  If $G-A$ is connected in $G$, then $\mu(G)-S$ is connected.

The proof of this claim is similar to Claim $1$.

\textbf{Claim 4.} If $G-A$ is disconnected in $G$ and there exists at least one component that has at most $g$ vertices, then $\mu(G)-S$ is connected, or $\mu(G)-S$ is disconnected and there exists at least one component that has at most $2g+1$ vertices.

The proof of this claim is similar to Claim $2$.

\textbf{Claim 5.} If $G-A$ is disconnected in $G$ and all components of $G-A$ have at least $g+1$ vertices, then $\mu(G)-S$ is connected.

Clearly, from the structure of $\mu(G)$, no matter how to delete $B^{'}$, we know $\mu(G)-S$ is always connected.

From Claim $3$, Claim $4$ and Claim $5$, when $|A|\geq \kappa_{g}(G)$, we can get $\mu(G)-S$ is connected, or $\mu(G)-S$ is disconnected and there exists at least one component that has at most $2g+1$ vertices, contradicting that the definition of $\kappa_{2g+1}(\mu(G))$.\qed

\textbf{Case 2.} $u\in S$.

\textbf{Case 2.1} $|A|< \kappa_{g}(G)$.

\textbf{Claim 6.} If $G-A$ is connected in $G$, then $\mu(G)-S$ is connected, or $\mu(G)-S$ is disconnected and the smallest component is an isolated vertex.

Let $M=G-A$. We consider the claim from the following two situations.

\begin{itemize}
\item $N_{A^{'}}(M)\subseteq B^{'}$.

From the structure of $\mu(G)$, it is easy to get that all vertices of $A^{'}-N_{A^{'}}(M)$ are isolated vertices in $\mu(G)-S$.

\item $N_{A^{'}}(M)\nsubseteq B^{'}$.

There exists at least one vertex $w\in N_{A^{'}}(M)$ and $w\notin B^{'}$. If $w$ is adjacent to all vertices of $A^{'}-N_{A^{'}}(M)$, then $w$ is connected with $M$. And $M^{'}$ is connected with $M$, so $\mu(G)-S$ is connected.
If $w$ is not adjacent to at least one vertex $v$ of $A^{'}-N_{A^{'}}(M)$, then $\mu(G)-S$ is disconnected and the smallest component is the vertex $v$.
\end{itemize}

\textbf{Claim 7.} If $G-A$ is disconnected in $G$ and there exists at least one component that has at most $g$ vertices, then $\mu(G)-S$ is connected, or $\mu(G)-S$ is disconnected and there exists at least one component that has at most $2g$ vertices.

Let $G_1,G_2,\cdots,G_r$ $(r\geq 2)$ be all components of $G-A$, and $X=\{G_i| |G_i|\leq g, 1 \leq i\leq k, 1 \leq k\leq r\}$, $Y=\{G_1, G_2, \cdots, G_r\}-X=\{G_j| |G_j|\geq g+1,k+1 \leq j\leq r\}$, $D^{'}=A^{'}\cap B^{'}$ and $F^{'}=A^{'}-D^{'}$. Three situations are considered.

\begin{itemize}
\item  $D^{'}=\varnothing$.

From the structure of $\mu(G)$, all components of $G-A$ are connected with $A^{'}$. Thus we can know $\mu(G)-S$ is connected.

\item  $D^{'}\neq\varnothing$ and $D^{'}=A^{'}$.

From the structure of $\mu(G)$, we have $\mu(G)-S$ is disconnected and $G_i+G^{'}_i$ $(G_i\in X)$ is one component of $\mu(G)-S$. Since $G_i\in X$ and $G_i^{'}$ is the twin of $G_i$, then $|G_i+G^{'}_i|\leq 2g$.

\item  $D^{'}\neq\varnothing$ and $D^{'}\neq A^{'}$.

\begin{itemize}
  \item[$\bullet$] $N_{A^{'}}(G_i)\subseteq D^{'}$ for some $G_i$ ($G_i\in X$).

       From the structure of $\mu(G)$, we have $\mu(G)-S$ is disconnected and $G_i+G^{'}_i$ is one component of $\mu(G)-S$. Since $G_i\in X$ and $G_i^{'}$ is the twin of $G_i$, then $|G_i+G^{'}_i|\leq 2g$.

  \item[$\bullet$] $N_{A^{'}}(G_i)\nsubseteq D^{'}$ for any $G_i$ ($G_i\in X$).

  If $N_{A^{'}}(G_j)\subseteq F^{'}$ for some $G_j$ ($G_j\in Y$), clearly, we know $\mu(G)-S$ is disconnected and includes the component $G_1+G_2+\cdots +G_k+G_j+G^{'}_1+G^{'}_2+\cdots +G_k^{'}+G_j^{'}$ ($1 \leq k\leq r$) and $|G_1+G_2+\cdots +G_k+G_j+G^{'}_1+G^{'}_2+\cdots +G_k^{'}+G_j^{'}|\geq 2g+3$, then $|D|=|D^{'}|=\kappa_{g}(G)$, contradicting that $|D|<|A|< \kappa_{g}(G)$.

 If $N_{A^{'}}(G_j)\nsubseteq F^{'}$ and $N_{A^{'}}(G_j)\cap F^{'}= \varnothing$ for any $G_j$ ($G_j\in Y$), clearly, we know $\mu(G)-S$ is disconnected and includes the component $G_1+G_2+\cdots +G_k+G^{'}_1+G^{'}_2+\cdots +G_k^{'}$ ($1 \leq k\leq r$). If $|G_1+G_2+\cdots +G_k|\geq g+1$, then $|D|=|D^{'}|=\kappa_{g}(G)$, contradicting that $|D|<|A|< \kappa_{g}(G)$. If $|G_1+G_2+\cdots +G_k|\leq g$, then $|G_1+G_2+\cdots +G_k+G^{'}_1+G^{'}_2+\cdots +G_k^{'}|\leq 2g$.

If $N_{A^{'}}(G_j)\nsubseteq F^{'}$ and $N_{A^{'}}(G_j)\cap F^{'}\neq \varnothing$ for any $G_j$ ($G_j\in Y$), then all components of $G-A$ are connected with $F^{'}$. Thus, we know $\mu(G)-S$ is connected.

\end{itemize}

From Claim $6$ and Claim $7$, when $|A|< \kappa_{g}(G)$, we can get $\mu(G)-S$ is connected, or $\mu(G)-S$ is disconnected and there exists at least one component that has at most $2g$ vertices, contradicting that the definition of $\kappa_{2g+1}(\mu(G))$.\qed

\textbf{Case 2.2} $|A|\geq \kappa_{g}(G)$.

\textbf{Claim 8.} If $G-A$ is connected in $G$, then $\mu(G)-S$ is connected, or $\mu(G)-S$ is disconnected and the smallest component is an isolated vertex.

The proof of this claim is similar to Claim $6$.

\textbf{Claim 9.} If $G-A$ is disconnected in $G$ and there exists at least one component that has at most $g$ vertices, then $\mu(G)-S$ is connected, or $\mu(G)-S$ is disconnected and there exists at least one component which has at most $2g$ vertices.

The proof of this claim is similar to Claim $7$.

\textbf{Claim 10.} If $G-A$ is disconnected in $G$ and every component of $G-A$ has at least $g+1$ vertices, then $\mu(G)-S$ is connected, or $\mu(G)-S$ is disconnected and the smallest component is an isolated vertex.

Let $G_1,G_2,\cdots,G_r$ $(r\geq 2)$ are all components of $G-A$ and $D^{'}=A^{'}\cap B^{'}$.

\item $D^{'}=\varnothing$.

From the structure of $\mu(G)$, all components of $G-A$ are connected with $A^{'}$, then $\mu(G)-S$ is connected.

\item  $D^{'}\neq\varnothing$ and $D^{'}=B^{'}$.

In fact, from $|V\cap S|=|A|\geq \kappa_{g}(G)$, so $|V^{'}\cap S|=|B^{'}|\leq \kappa_{g}(G)-1$, we know there exists one vertex $w\in A$ and $w\notin B$ (the twin of $B^{'}$).

If $w$ is connected with all components of $G-A$, then we have $w^{'}$ (the twin of $w$) is also connected with the components, so $\mu(G)-S$ is connected.

If $w$ is connected with some components of $G-A$, then every component of $G-B$ has at least $g+1$ vertices in $G$, clearly, we have  $|B|=|B^{'}|=\kappa_{g}(G)$, contradicting that $|B^{'}|\leq \kappa_{g}(G)-1$.

If $w$ is disconnected with all components of $G-A$, we have $w$ and $w^{'}$ are connected with $B$. From the structure of $\mu(G)$, we have $\mu(G)-S$ is disconnected and $w^{'}$ is an isolated vertex component in $\mu(G)-S$.

\item $D^{'}\neq\varnothing$ and $D^{'}\neq B^{'}$.

If $D^{'}$ is connected with all components of $G-A$, then $\mu(G)-S$ is connected.

If $D^{'}$ is connected with some components of $G-A$, from the structure of $\mu(G)$ then $D$ is connected with these components, then every component of $G-D$ has at least $g+1$ vertices in $G$, clearly, $|D|=\kappa_{g}(G)$, contradicting that $|D|<|B^{'}|\leq \kappa_{g}(G)-1$.

\end{itemize}

From Claim $8$, Claim $9$ and Claim $10$, when $|A|\geq \kappa_{g}(G)$, we can get $\mu(G)-S$ is connected, or $\mu(G)-S$ is disconnected and there exists at least one component that has at most $2g$ vertices, contradicting that the definition of $\kappa_{2g+1}(\mu(G))$.

\end{proof}

\section{Conclusion}

Looking for the $g$-extra connectivity of a graph is quite difficult. In fact, its existence is also an open problem until now. The $g$-good-neighbor connectivity of a graph is the same as it. In the forthcoming paper, we will investigated the $g$-good-neighbor connectivity of the Mycielskian.


\begin{thebibliography}{1}

\bibitem{Bondy}
J. A. Bondy, U.S.R. Murty, \emph{Graph Theory}, GTM 244, Springer,
2008.

\bibitem{FF}
J.F\'{a}bregta, M.A.Fiol, On the Extraconnectivity of Graph, Discr.Math.155(1996),49-57.

\bibitem{WX2}
Y. Wei, M. Xu, The g-good-neighbor conditional diagnosability of locally twisted cubes, J. Oper.
Res. Soc. China 6(2)(2018), 333-347.

\bibitem{Boesch}
F.T. Boesch, Synthesis of reliable networks〞a survey, IEEE Trans. Reliab. 35(3) (1986) 240每246.

\bibitem{CGJ}
G.J. Chang, L. Huang, X. Zhu, Circular chromatic number of Mycielski＊s graphs, Discrete Math. 205 (1999) 23每37.

\bibitem{Hajibolhassan}
D. Hajibolhassan, X. Zhu, The circular chromatic number and Mycielski construction, J. Graph Theory 44 (2003) 106每115.

\bibitem{Lam}
P.C.B. Lam, W. Lin, G. Gu, Z. Song, Circular chromatic number and a generalization of the construction of Mycielski, J. Combin. Theory Ser.
B 89 (2003) 195每205.

\bibitem{Chang}
N.-W. Chang, S.-Y. Hsieh, {2, 3}-Extraconnectivities of hypercube-like networks, J. Comput. System Sci. 79(5) (2013) 669每688.

\bibitem{Mycielski}
J. Mycielski, Sur le colouriage des graphes, Colloq. Math. 3 (1955) 161每162.

\bibitem{CN}
N.-W. Chang, C.-Y. Tsai, S.-Y. Hsieh, On 3-extra connectivity and 3-extra edge connectivity of folded hypercubes, IEEE Trans. Comput. 63(6) (2014) 1593每1599.

\bibitem{FJ}
J.F\'{a}bregta, M.A. Fiol, Extraconnectivity of graphs with large girth, Discrete Math. 127(1每3) (1994) 163每170.

\bibitem{Esfahanian}
A.H. Esfahanian, Generalized measures of fault tolerance with application to n-cube networks, IEEE Trans. Comput. 38(11) (1989) 1586每1591.

\bibitem{Liu}
D.D.-F. Liu, Circular chromatic number for iterated Mycielski graphs, Discrete Math. 285 (2004) 335每340.


\bibitem{LiuH}
H. Liu, Circular chromatic number and Mycielski graphs, Acta Math. Sci. 26B (2) (2006) 314每320.

\bibitem{EA}
A. Esfahanian, S. Hakimi, On computing a conditional edge-connectivity of a graph, Inform. Process. Lett. 27(4) (1988) 195每199.

\bibitem{Hong}
W.-S. Hong, S.-Y. Hsieh, Extra edge connectivity of hypercube-like networks, Int. J. Parallel Emergent Distrib. Syst. 28(2) (2013) 123每133.

\bibitem{Hsieh}
S.-Y. Hsieh, Y.-H. Chang, Extraconnectivity of k-ary n-cube networks, Theoret. Comput. Sci. 443(20) (2012) 63每69.

\bibitem{Latifi}
S. Latifi, M. Hegde, M. Naraghi-Pour, Conditional connectivity measures for large multiprocessor systems, IEEE Trans. Comput. 43(2) (2002) 218每222.

\bibitem{Wan}
M. Wan, Z. Zhang, A kind of conditional vertex connectivity of star graphs, Appl. Math. Lett. 22(2) (2009) 264每267.

\bibitem{Yang}
W. Yang, J. Meng, Extraconnectivity of hypercubes, Appl. Math. Lett. 22(6) (2009) 887每891.

\bibitem{Zhu}
Q. Zhu, On conditional diagnosability and reliability of the BC networks, J. Supercomput. 45(2) (2008) 173每184.

\bibitem{ZQ}
Q. Zhu, X.-K. Wang, G. Cheng, Reliability evaluation of BC networks, IEEE Trans. Comput. 62(11) (2012) 2337每2340.

\bibitem{Guo}
L. Guo, R. Liu, X.f. Guo, Super Connectivity and Super Edge Connectivity
of the Mycielskian of a Graph, Graphs and Combinatorics (2012) 28:143每147

\bibitem{Balakrishnan}
R. Balakrishnan, S. Francis Raj, Connectivity of the Mycielskian of a graph, Discrete Mathematics, 308 (2008) 2607 每 2610


\end{thebibliography}
\end{document}